\newtheorem{theorem}{Theorem}
\newtheorem{lemma}[theorem]{Lemma}
\newtheorem{proposition}[theorem]{Proposition}
\newtheorem{corollary}[theorem]{Corollary}
\theoremstyle{definition}
\begin{document}

\title[Idempotents in matrix rings]
{Idempotents of $2\times 2$ matrix rings\\
over rings of formal power series}

\author[Vesselin Drensky]
{Vesselin Drensky}

\address{Institute of Mathematics and Informatics,
Bulgarian Academy of Sciences,
1113 Sofia, Bulgaria}
\email{drensky@math.bas.bg}

\thanks{Partially supported by
Grant KP-06 N 32/1 of 07.12.2019 ``Groups and Rings -- Theory and Applications'' of the Bulgarian National Science Fund.}

\subjclass[2020]{16U40, 16S50, 15A15, 15B33, 15A54, 13B25, 13F25}
\keywords{idempotents, matrices over rings of power series, Cayley-Hamilton theorem}

\maketitle

\begin{abstract}
Let $A_1,\ldots,A_s$ be unitary commutative rings which do not have non-trivial idempotents
and let $A=A_1\oplus\cdots\oplus A_s$ be their direct sum.
We describe all idempotents in the $2\times 2$ matrix ring $M_2(A[[X]])$
over the ring $A[[X]]$ of formal power series with coefficients in $A$ and in arbitrary set of variables $X$.
We apply this result to the matrix ring $M_2({\mathbb Z}_n[[X]])$ over the ring ${\mathbb Z}_n[[X]]$
for an arbitrary positive integer $n$ greater than 1.
Our proof is elementary and uses only the Cayley-Hamilton theorem (for $2\times 2$ matrices only) and,
in the special case $A={\mathbb Z}_n$, the Chinese reminder theorem and the Euler-Fermat theorem.
\end{abstract}

\section*{Introduction}

In this paper we consider unitary associative rings only.
An element $a$ in a ring $A$ is called an idempotent if $a^2=a$.
The idempotents were introduced in Ring Theory by Benjamin Peirce in his seminal book \cite{P}
(published lithographically in 1870 in a small number of copies for distribution among his friends
and posthumously in its journal form by his son Charles Sanders Peirce in 1881).
There Peirce established the decomposition of rings with idempotents
known nowadays as the Peirce decomposition, see \cite[Proposition 41]{P}.
Already 150 years the study of idempotents is among the main topics in Ring Theory and its applications.
For example the search in the database of Mathematical Reviews
gives more than 2600 publications with the word ``idempotents'' in the title.
For different aspects of the study of idempotents in the classical spirit see for example \'Anh, Birkenmeier and van Wyk \cite{ABvW2}
and the references there.

It is well known that the idempotents of the $d\times d$ matrix algebra $M_d(F)$ over a field $F$
coincide with the diagonalizable matrices with eigenvalues equal to 0 and 1.
In 1946 Foster \cite{F} described the commutative rings $A$ with the property that the idempotents in $M_d(A)$ are diagonalizable for all $d$.
In 1966 Steger \cite{S} showed that important classes of rings have this property.
Among them are polynomial rings in one variable over a principal ideal ring (also with zero divisors)
and polynomial rings in two variables over a $\pi$-regular ring with finitely many idempotents.
(The ring $A$ is $\pi$-regular if for any $a\in A$ there exists an $n$ such that $a^n\in a^nAa^n$.)
The results of Foster and Steger were generalized also for matrices over noncommutative rings, see Song and Guo \cite{SG}.
In \cite{G-TKLN} G\'omez-Torrecillas, Kutas, Lobillo and Navarro presented an algorithm for of computing a primitive idempotent
of a central simple algebra over the field ${\mathbb F}_q(x)$ of rational functions over the  finite field ${\mathbb F}_q$ with applications to coding theory.
In 1967 J.A. Erdos \cite{E} proved that every singular matrix over a field is a product of idempotent matrices.
See also the recent preprint of Nguyen \cite{N} and the references there for further developments in this direction.
We shall mention also the paper
by \'Anh, Birkenmeier and van Wyk \cite{ABvW1} where the authors mimic the behavior of idempotents in matrix rings  in a more general setup
and the following three papers which are related to the present project:
by Birkenmeier, Kim and Park \cite{BKP} and
Kanwar, Leroy and Matczuk \cite{KLM} for relations between the idempotents of $A$, $A[X]$ and $A[[X]]$ where $X$ is a finite set of variables
and by Isham and Monroe \cite{IM} for the properties of the idempotents in ${\mathbb Z}_n$.

It is a natural problem to describe explicitly the idempotents of the matrix rings $M_d(A)$ and $M_d(A[x])$, respectively,
over a commutative ring $A$ and over the polynomial ring $A[x]$ if the idempotents of $A$ are known.
Most of the explicit results in this direction are for the $2\times 2$ matrix ring $M_2({\mathbb Z}_n[x])$
for positive integers $n$ with a small number of prime factors.
For $p$ prime Kanwar, Khatkar and Sharma \cite{KKS} described the idempotents
of $M_2({\mathbb Z}_p[x])$, $M_2({\mathbb Z}_{2p}[x])$ ($p$ odd) and $M_2({\mathbb Z}_{3p}[x])$ ($p>3$).
Balmaceda and Datu \cite{BD} found the idempotents in $M_2({\mathbb Z}_{pq}[x])$ and $M_2({\mathbb Z}_{p^2}[x])$, where $p$ and $q$ are any primes.
Finally, Mittal \cite{M} described the idempotents in $M_2({\mathbb Z}_{pqr}[x])$ for three pairwise different primes greater than 3.
Mittal raised also the question to find the idempotents in $M_2({\mathbb Z}_n[x])$ for any square-free positive integer $n$.

The main step in \cite{KKS} is the description of the idempotents in $M_2(A[x])$
where $A$ is a commutative ring without non-trivial (i.e. different from 0 and 1) idempotents.
Since this holds for $A={\mathbb Z}_p$ for $p$ prime, the authors apply the Chinese remainder theorem and the Euler-Fermat theorem to handle the cases
$A={\mathbb Z}_{2p},{\mathbb Z}_{3p}$. A similar approach was applied in \cite{BD} and \cite{M}.

In the present paper we simplify the ideas in \cite{KKS, BD, M}
and give an explicit presentation of the idempotents of $M_2(A[[X]])$
where $A$ is a direct sum of a finite number of commutative rings without non-trivial idempotents
and $A[[X]]$ is the ring of formal power series over an arbitrary (also infinite) set of commuting variables.
As a consequence we describe the idempotents of $M_2({\mathbb Z}_n[X])$ when $n$ is an arbitrary positive integer greater than 1.
Our proofs are very transparent and use well known elementary arguments only.
They are based on the Cayley-Hamilton theorem (for $2\times 2$ matrices only) and,
as in \cite{KKS, BD, M}, on the Chinese remainder theorem and the Euler-Fermat theorem.

\section{The main result}

We assume that $X$ is an arbitrary set of commuting variables and for a commutative ring $A$ we consider the ring $A[[X]]$
of formal power series in $X$.
In order to make the exposition self-contained we include also the proofs of some well known facts on idempotents.

The following lemma is a partial case of \cite[Proposition 2.5]{BKP} and \cite[Lemma 1]{KLM}.

\begin{lemma}\label{idempotents in power series}
Let $A$ be a commutative ring without non-trivial idempotents. Then the ring $A[[X]]$ also has only trivial idempotents.
\end{lemma}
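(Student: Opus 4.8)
The plan is to exploit the grading of $A[[X]]$ by total degree. Writing any $f\in A[[X]]$ as $f=\sum_{d\ge 0}f_d$, where $f_d$ denotes the sum of all monomials of total degree $d$ occurring in $f$, multiplication respects this grading in the sense that the degree-$d$ homogeneous component of a product $fg$ is the finite sum $\sum_{i+j=d}f_ig_j$. This behaves the same way whether $X$ is finite or infinite, since each such sum ranges over only the $d+1$ pairs $(i,j)$ with $i+j=d$. I would start from an arbitrary idempotent $f\in A[[X]]$ and compare homogeneous components on the two sides of $f^2=f$.

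First I would read off the degree-$0$ part, which gives $f_0^2=f_0$, so the constant term $f_0$ is an idempotent of $A$ and hence, by hypothesis, equals $0$ or $1$. Next I would reduce to the case $f_0=0$: if instead $f_0=1$, I replace $f$ by $g=1-f$, which is again idempotent, since $g^2=1-2f+f^2=1-f=g$, and which now has constant term $0$; showing $g=0$ then forces $f=1$. Thus it suffices to prove that an idempotent with zero constant term must itself be $0$.

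The main step is then an induction on the total degree $d$ showing that $f_d=0$ for all $d$ under the standing assumption $f_0=0$. The base case $d=0$ is the assumption. For the inductive step, comparing the degree-$d$ components of $f^2=f$ yields $f_d=\sum_{i+j=d}f_if_j$; for $d\ge 1$ every pair $(i,j)$ with $i+j=d$ has at least one index strictly smaller than $d$, so each summand contains a factor $f_k$ with $k<d$, which vanishes by the inductive hypothesis. Hence $f_d=0$, the induction closes, and $f=0$, so the original idempotent lies in $\{0,1\}$.

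I do not expect a genuine obstacle here, as the lemma is elementary and the argument is purely formal. The only point that demands a little care is the infinite-variable case, where one must confirm that the total-degree grading is still well defined on $A[[X]]$ and that the convolution formula for the degree-$d$ part of a product remains a finite sum; once that is in place the induction runs verbatim, and no nilpotency or completeness argument beyond the grading is required.
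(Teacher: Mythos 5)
Your proof is correct and follows essentially the same route as the paper: both extract the degree-$0$ equation $f_0^2=f_0$ from the total-degree grading and then compare higher homogeneous components of $f^2=f$. The paper reaches the conclusion by taking the least $k\ge 1$ with $f_k\neq 0$ and deriving $2f_0f_k=f_k$, which is contradictory whether $f_0=0$ or $f_0=1$, while you normalize to constant term $0$ by passing to $1-f$ and then induct on the degree --- the same computation, organized slightly differently.
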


\begin{proof}
Let $a(X)=a_0+a_1+a_2+\cdots\in A[[X]]$ be an idempotent, where $a_i$ is the homogeneous component of degree $i$ of $a(X)$.
Let $a(X)\notin A$ and $a_1=\cdots=a_{k-1}=0$, $a_k\not=0$.
Since $a^2(X)=a(X)$, comparing the homogeneous components of $a(X)$ and $a^2(X)$ we obtain that
$a_0^2=a_0$ in $A$ and $2a_0a_k=a_k$. Since $A$ has trivial idempotents only, we have that either $a_0=1$ or $a_0=0$.
Both cases are impossible: If $a_0=1$, then $2a_k=a_k$ and $a_k=0$; if $a_0=0$, then again $a_k=0$ which is a contradiction.
Therefore the idempotent $a(X)$ belongs to $A$ and hence is trivial.
\end{proof}

The following proposition is a partial case of \cite[Proposition 3.2]{KKS}
but in virtue of Lemma \ref{idempotents in power series} both statements are equivalent.
We give a simpler straightforward proof.

\begin{proposition}\label{matrix idempotents for rings without non-trivial idempotents}
Let $A$ be a commutative ring without non-trivial idempotents. Then all idempotents in $M_2(A)$ are
\begin{equation}\label{form of the idempotents}
a=\left(\begin{matrix}\alpha&\beta\\ \gamma&1-\alpha\\ \end{matrix}\right),I_2=\left(\begin{matrix}1&0\\0&1\\ \end{matrix}\right),
0_2=\left(\begin{matrix}0&0\\0&0\\ \end{matrix}\right),
\end{equation}
where $\alpha,\beta,\gamma\in A$ and $\alpha(1-\alpha)=\beta\gamma$.
\end{proposition}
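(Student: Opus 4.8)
The plan is to take an arbitrary idempotent
$a=\left(\begin{smallmatrix}\alpha&\beta\\\gamma&\delta\end{smallmatrix}\right)\in M_2(A)$
and extract the stated normal form directly from the equation $a^2=a$, using the Cayley--Hamilton theorem to control the diagonal. First I would invoke Cayley--Hamilton for the $2\times2$ matrix $a$, which gives
$a^2-(\operatorname{tr}a)\,a+(\det a)\,I_2=0_2$, where $\operatorname{tr}a=\alpha+\delta$ and $\det a=\alpha\delta-\beta\gamma$. Since $a^2=a$, substituting yields
$a-(\alpha+\delta)a+(\alpha\delta-\beta\gamma)I_2=0_2$, i.e.
$(1-\alpha-\delta)\,a=-(\alpha\delta-\beta\gamma)\,I_2$. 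This single relation is the engine of the whole argument.

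The key step is to read off what $(1-\alpha-\delta)a=-(\alpha\delta-\beta\gamma)I_2$ says entrywise. Comparing the off-diagonal entries forces $(1-\alpha-\delta)\beta=0$ and $(1-\alpha-\delta)\gamma=0$, while the diagonal entries give $(1-\alpha-\delta)\alpha=(1-\alpha-\delta)\delta=-(\alpha\delta-\beta\gamma)$. The idea is that the scalar $t:=\alpha+\delta=\operatorname{tr}a$ is itself an idempotent of $A$: indeed, taking the trace of $a^2=a$ and combining with the diagonal relations shows $t^2=t$ (one clean way is to note $\operatorname{tr}(a^2)=\operatorname{tr}(a)=t$, and Cayley--Hamilton gives $\operatorname{tr}(a^2)=t^2-2\det a$, while summing the two diagonal entries of the relation above pins down $\det a$ in terms of $t$; the upshot after simplification is $t^2=t$). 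Since $A$ has no non-trivial idempotents, $t\in\{0,1\}$, and these are exactly the cases that will split off $0_2$, $I_2$, and the generic form.

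From here the three cases fall out. If $t=\alpha+\delta=0$, then $\delta=-\alpha$ and the relation $(1-t)a=-(\det a)I_2$ becomes $a=-(\det a)I_2$; comparing off-diagonal entries gives $\beta=\gamma=0$, so $a=\alpha I_2$ with $\alpha$ an idempotent of $A$, forcing $\alpha\in\{0,1\}$, hence $a=0_2$ (the case $\alpha=1,\delta=-1$ is excluded since it would need $t=0$ but contributes to the $t=1$ analysis). If $t=\alpha+\delta=1$, then $\delta=1-\alpha$, the scalar $1-\alpha-\delta$ vanishes, and the only surviving constraint is $\det a=0$, i.e. $\alpha(1-\alpha)-\beta\gamma=0$, which is precisely $\alpha(1-\alpha)=\beta\gamma$; this is the generic form in \eqref{form of the idempotents}. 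I would then check the converse—that every matrix of this shape is genuinely idempotent—by a direct two-line computation of $a^2$ using $\delta=1-\alpha$ and $\beta\gamma=\alpha(1-\alpha)$.

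The main obstacle is the cleanest route to $t^2=t$: one must be careful that the relation $(1-t)a=-(\det a)I_2$ does not by itself force $t\in\{0,1\}$ when $1-t$ is a zero divisor, so the idempotency of $t$ has to be derived independently (via traces) rather than by ``cancelling'' $1-t$. Once $t^2=t$ is in hand, the hypothesis that $A$ has only trivial idempotents does all the remaining work, and the case split is routine. I expect the write-up to be short, with the only genuine care needed in the trace computation establishing $t^2=t$.
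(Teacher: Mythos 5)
Your proposal contains a fatal error at its central step: the claim that $t=\operatorname{tr}(a)$ is an idempotent of $A$ is false. The identity matrix $I_2$ is an idempotent of $M_2(A)$ with trace $2$, and $2$ is not an idempotent in $\mathbb{Z}$, $\mathbb{Z}_p$ ($p>2$), or most rings without non-trivial idempotents. Your suggested derivation of $t^2=t$ is in fact circular: taking the trace of the Cayley--Hamilton relation together with $a^2=a$ gives $t=t^2-2\det a$, i.e. $t^2-t=2\det a$; summing the two diagonal entries of $(1-t)a=-(\det a)I_2$ gives $(1-t)t=-2\det a$, which is the \emph{same} equation. Combining them yields a tautology, not $t^2=t$. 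As a consequence your case split $t\in\{0,1\}$ can never produce $I_2$ (whose trace is $2$), so the argument fails to recover one of the three families of idempotents in the statement; it would wrongly conclude that $I_2$ is not an idempotent at all.

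The quantity that \emph{is} an idempotent of $A$ without further hypotheses is the determinant: $\det a=\det(a^2)=(\det a)^2$, so $\det a\in\{0,1\}$, and this is where the case analysis must begin (as in the paper). If $\det a=1$, the relation $(\operatorname{tr}(a)-1)a=\det(a)I_2$ shows $\operatorname{tr}(a)-1$ is invertible, forcing $\beta=\gamma=0$, $\alpha=\delta$, and then $a=\alpha I_2$ with $\alpha$ idempotent gives $a=I_2$. Only in the remaining case $\det a=0$ does one have $(\operatorname{tr}(a)-1)a=0_2$, and then the vanishing of its diagonal entries, $(\alpha+\delta-1)\alpha=(\alpha+\delta-1)\delta=0$, yields
\[
\bigl(1-\alpha-\delta\bigr)^2=-(\alpha+\delta-1)\alpha-(\alpha+\delta-1)\delta+(\alpha+\delta-1)+\cdots=1-\alpha-\delta,
\]
so that $1-\operatorname{tr}(a)$ is an idempotent \emph{in this case only}; the hypothesis on $A$ then gives either $a=0_2$ or $\delta=1-\alpha$ with $\alpha(1-\alpha)=\beta\gamma$. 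In short: your instinct to worry about the step establishing idempotency of the trace was well placed, but the resolution is not a cleverer trace computation --- the statement itself is false --- and the determinant, not the trace, is the invariant that drives the proof.
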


\begin{proof}
Let \[
a=\left(\begin{matrix} \alpha&\beta\\ \gamma&\delta\end{matrix}\right)\in M_2(A),\quad \alpha,\beta,\gamma,\delta\in A,
\]
be an idempotent. By the Cayley-Hamilton theorem we have
\[
a^2-\text{tr}(a)a+\det(a)I_2=0_2.
\]
Subtracting the equality $a^2-a=0_2$ we obtain that
\[
(\text{tr}(a)-1)a=\det(a)I_2.
\]
The determinant $\det(a)$ of $a$ is an idempotent of $A$ because the equality $a^2=a$ implies
$\det(a)=\det(a^2)=\det^2(a)$. Hence $\det(a)=1$ or $\det(a)=0$.
First, let $\det(a)=1$. Comparing the entries of the matrices in the equality $(\text{tr}(a)-1)a=0_2$ we obtain that
\[
(\text{tr}(a)-1)\alpha=(\text{tr}(a)-1)\delta=1,(\text{tr}(a)-1)\beta=(\text{tr}(a)-1)\gamma=0.
\]
Hence $(\text{tr}(a)-1)$ is invertible in $A$. This implies that $\beta=\gamma=0$ and $\alpha=\delta\not=0$.
Hence $\alpha I_2=a=a^2=\alpha^2I_2$ and $\alpha$ is an idempotent. Therefore $\alpha=1$ and $a=I_2$.
Now, let $\det(a)=0$. Hence $(\text{tr}(a)-1)a=0_2$ and
\[
(\text{tr}(a)-1)\alpha=(\text{tr}(a)-1)\delta,(\alpha+\delta-1)\alpha=(\alpha+\delta-1)\delta=0,
\]
\[
(-(\alpha+\delta-1))^2=(\alpha+\delta-1)\alpha+(\alpha+\delta-1)\delta-(\alpha+\delta-1)=-(\alpha+\delta-1).
\]
We obtain that $-(\alpha+\delta-1)$ is an idempotent and is equal to 1 or 0. The former case implies that $-a=0_2$ and $a=0_2$.
In the latter case $\delta=1-\alpha$ and $a$ has the desired form (\ref{form of the idempotents}). Since $\det(a)=\alpha\delta-\beta\gamma=0$
we obtain the restriction $\alpha(1-\alpha)=\beta\gamma$. A direct verification shows that all matrices of this form are idempotents.
\end{proof}

\begin{corollary}\label{idempotents for direct sums}
Let $A_1,\ldots,A_s$ be commutative rings without non-trivial idempotents and $A=A_1\oplus\cdots\oplus A_s$ be their direct sum.
Then all idempotents $a(X)\in M_2(A[[X]])$ in the $2\times 2$ matrix ring with entries from $A[[X]]$
are obtained by the following procedure.
We split the set of indices $\{1,\ldots,s\}$ in three parts
\[
P=\{p_1,\ldots,p_k\}, Q=\{q_1,\ldots,q_l\},R=\{r_1,\ldots,r_m\}
\]
and present $A$ in the form $A=A_P\oplus A_Q\oplus A_R$, where
\[
A_P=\bigoplus_{p\in P}A_p, A_Q=\bigoplus_{q\in Q}A_q, A_R=\bigoplus_{r\in R}A_r.
\]
We choose power series $\alpha(X),\beta(X),\gamma(X)\in A_P[[X]]$ such that
$\alpha(X)(1-\alpha(X))=\beta(X)\gamma(X)$. Then $a(X)=(a_P(X),I_2,0_2)$, where $I_2\in M_2(A_Q[[X]])$, $0_2\in M_2(A_R[[X]])$ and
\begin{equation}\label{the P-component}
a_P(X)=\left(\begin{matrix}\alpha(X)&\beta(X)\\ \gamma(X)&1-\alpha(X)\\ \end{matrix}\right)\in M_2(A_P[[X]]).
\end{equation}
\end{corollary}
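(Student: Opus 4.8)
The plan is to reduce everything to the two results already proved by exploiting the direct-sum structure of $A$. The central idempotents $e_i=(0,\ldots,0,1,0,\ldots,0)\in A$, with $1$ in the $i$-th slot, remain central idempotents in $A[[X]]$, and multiplication by $e_i$ picks out those power series whose coefficients lie in $A_i$. Hence $A[[X]]$ decomposes as the finite direct sum $A_1[[X]]\oplus\cdots\oplus A_s[[X]]$; this is the one place where the finiteness of $s$ (though not the finiteness of $X$) is used. Passing to $2\times 2$ matrices gives a ring isomorphism $M_2(A[[X]])\cong M_2(A_1[[X]])\oplus\cdots\oplus M_2(A_s[[X]])$, under which an idempotent $a(X)$ corresponds to a tuple $(a_1(X),\ldots,a_s(X))$ with each $a_i(X)$ an idempotent of $M_2(A_i[[X]])$, since the idempotents of a direct sum of rings are exactly the tuples of idempotents.

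Next I would analyze a single component. By Lemma \ref{idempotents in power series}, the assumption that $A_i$ has no non-trivial idempotents forces $A_i[[X]]$ to have only trivial idempotents as well, so $A_i[[X]]$ is itself a commutative ring without non-trivial idempotents. Proposition \ref{matrix idempotents for rings without non-trivial idempotents}, applied with $A_i[[X]]$ in place of $A$, then tells us that each $a_i(X)$ is either $I_2$, or $0_2$, or a matrix of the form (\ref{form of the idempotents}) with entries in $A_i[[X]]$ subject to $\alpha(1-\alpha)=\beta\gamma$.

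Finally I would reassemble. These three possibilities are mutually exclusive: a matrix of the form (\ref{form of the idempotents}) has trace $1$, so it equals neither $I_2$ nor $0_2$ as soon as $1\neq 0$ in $A_i$, which holds because $A_i$ is a nonzero ring, and $I_2\neq 0_2$ for the same reason. Thus the classification partitions $\{1,\ldots,s\}$ unambiguously into the set $Q$ of indices with $a_i(X)=I_2$, the set $R$ of indices with $a_i(X)=0_2$, and the remaining set $P$. Collecting the entries of the generic components over $i\in P$ produces power series $\alpha(X),\beta(X),\gamma(X)\in A_P[[X]]$ satisfying $\alpha(X)(1-\alpha(X))=\beta(X)\gamma(X)$, and the resulting block is exactly $a_P(X)$ of (\ref{the P-component}). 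This shows that every idempotent arises from the stated procedure; the converse, that every such assembled tuple is idempotent, follows at once from the converse direction already verified in Proposition \ref{matrix idempotents for rings without non-trivial idempotents}, applied component-wise.

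There is no genuinely hard step here, since the proposition does the real work. The point that most deserves care is the first one: checking that the decomposition $A[[X]]\cong A_1[[X]]\oplus\cdots\oplus A_s[[X]]$ is valid for an arbitrary, possibly infinite, set of variables $X$, and that it is compatible both with passing to $M_2(-)$ and with the notion of idempotent. Once that bookkeeping is in place, the classification of each component and the partition into $P,Q,R$ are routine.
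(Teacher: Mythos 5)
Your proof is correct and follows essentially the same route as the paper: apply Lemma \ref{idempotents in power series} to see that each $A_i[[X]]$ has only trivial idempotents, classify each component via Proposition \ref{matrix idempotents for rings without non-trivial idempotents}, and reassemble along the decomposition $M_2(A[[X]])\cong M_2(A_1[[X]])\oplus\cdots\oplus M_2(A_s[[X]])$. The only difference is that you justify the identification $A[[X]]\cong A_1[[X]]\oplus\cdots\oplus A_s[[X]]$ explicitly via the central idempotents $e_i$ (a point the paper treats as immediate), which is a harmless and indeed welcome bit of extra care.
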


\begin{proof}
Since each $A_i$, $i=1,\ldots,s$, does not have non-trivial idempotents, by Lemma \ref{idempotents in power series}
the same holds for the rings of power series $A_i[[X]]$.
Applying Proposition \ref{matrix idempotents for rings without non-trivial idempotents}
we obtain that the idempotents in $M_2(A_i[[X]])$ are of the form
\begin{equation}\label{the PQR cases}
a_i(X)=\left(\begin{matrix}\alpha_i(X)&\beta_i(X)\\ \gamma_i(X)&1-\alpha_i(X)\\ \end{matrix}\right),
\quad \alpha_i(X)(1-\alpha_i(X))=\beta_i(X)\gamma_i(X),
\end{equation}
where $\alpha_i(X),\beta_i(X),\gamma_i(X)\in A_i[[X]]$,
or $a_i(X)=I_2$, or $a_i(X)=0_2$.
We present the set $\{1,\ldots,s\}$ as a disjoint union of three subsets $P,Q$ and $R$,
where $p\in P$ if $a_p(X)$ is of the form (\ref{the PQR cases}),
$q\in Q$ if $a_q(X)=I_2^{(q)}$ (the identity matrix in $M_2(A_q[[X]])$)
and $r\in R$ if $a_r(t)=0_2^{(r)}$ (the zero matrix in $M_2(A_r[[X]])$).
Let
\[
a(X)=(a_{p_1}(X),\ldots,a_{p_k}(X))\in A_{p_1}[[X]]\oplus\cdots\oplus A_{p_k}[[X]].
\]
Since $A_P[[X]]=A_{p_1}[[X]]\oplus\cdots\oplus A_{p_k}[[X]]$, we obtain that $a(X)$ has the form (\ref{the P-component}) and
\[
\begin{array}{l}
\alpha(X)=(\alpha_{p_1}(X),\ldots,\alpha_{p_k}(X)),\\
\beta(X)=(\beta_{p_1}(X),\ldots,\beta_{p_k}(X)),\\
\gamma(X)=(\gamma_{p_1}(X),\ldots,\gamma_{p_k}(X))\\
\end{array}
\]
satisfy the relation $\alpha(X)(1-\alpha(X))=\beta(X)\gamma(X)$ because the coordinate power series
$\alpha_p(X),\beta_p(X),\gamma_p(X)$ satisfy $\alpha_p(X)(1-\alpha_p(X))=\beta_p(X)\gamma_p(X)$
for all $p=p_1,\ldots,p_k$. Obviously $(I_2^{(q_1)},\ldots,I_2^{(q_l)})$ equals the identity matrix in $M_2(A_Q[[X]])$ and
$(0_2^{(r_1)},\ldots,0_2^{(r_l)})$ is the zero matrix in $M_2(A_R[[X]])$ which completes the proof.
\end{proof}

We shall apply Corollary \ref{idempotents for direct sums} for $A={\mathbb Z}_n$, $n>1$. We need the following well known fact.
We include the proof for self-completeness of the exposition.

\begin{lemma}\label{idempotents modulo power of prime}
Let $p$ be a prime and $d>0$. Then all the idempotents of the ring ${\mathbb Z}_{p^d}$ are trivial.
\end{lemma}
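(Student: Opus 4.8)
The plan is to show that the only idempotents of $\mathbb{Z}_{p^d}$ are $0$ and $1$. Let $e$ be an idempotent, so $e^2 = e$ in $\mathbb{Z}_{p^d}$, which means $e(e-1) \equiv 0 \pmod{p^d}$. The crucial observation is that $e$ and $e-1$ are consecutive integers, hence coprime: no prime divides both. In particular, the prime $p$ can divide at most one of $e$ and $e-1$.

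First I would translate the divisibility $p^d \mid e(e-1)$ into a statement about the individual factors. Since $\gcd(e, e-1) = 1$, the prime power $p^d$ cannot distribute its factors of $p$ across both $e$ and $e-1$; all $d$ copies of $p$ must land in the same factor. Thus either $p^d \mid e$ or $p^d \mid (e-1)$.

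The two cases then give the conclusion directly. If $p^d \mid e$, then $e \equiv 0 \pmod{p^d}$, so $e = 0$ in $\mathbb{Z}_{p^d}$. If instead $p^d \mid (e-1)$, then $e \equiv 1 \pmod{p^d}$, so $e = 1$. In either case the idempotent is trivial, which is exactly the claim.

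There is no real obstacle here; the argument is elementary number theory. The only point requiring a moment's care is justifying that the factors of $p$ cannot split between $e$ and $e-1$, which follows immediately from their coprimality together with the fact that $p$ is prime (so that $p \mid e(e-1)$ forces $p \mid e$ or $p \mid (e-1)$, and this single factor must then absorb the full power $p^d$). This is the only nontrivial step, and it is standard.
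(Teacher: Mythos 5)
Your proof is correct and follows essentially the same route as the paper: lift the idempotent to an integer, observe that $p^d$ divides $e(e-1)$, and use the coprimality of the consecutive integers $e$ and $e-1$ to force $p^d \mid e$ or $p^d \mid (e-1)$. Nothing to add.
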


\begin{proof}
Let $\alpha\in{\mathbb Z}$ be such that its image $\overline{\alpha}$ in ${\mathbb Z}_{p^d}$ is an idempotent.
Hence $\alpha^2-\alpha\equiv 0\, (\text{mod }p^d)$
and $p^d$ divides $\alpha(\alpha-1)$. Since $\alpha$ and $\alpha-1$ are coprime,
we have that either $p^d$ divides $\alpha$ and $\overline{\alpha}=0$ in ${\mathbb Z}_{p^d}$,
or  $p^d$ divides $\alpha-1$ and $\overline{\alpha}=1$ in ${\mathbb Z}_{p^d}$.
\end{proof}

The following theorem was the main motivation to start the present project.

\begin{theorem}\label{main theorem}
Let $n>1$ be a positive integer. Then all idempotents $a(X)$ in $M_2({\mathbb Z}_n[[X]])$ are obtained in the following way.
We present $n$ as a product $n=PQR$ of three pairwise coprime positive integers $P,Q,R$. If $P>1$ we choose three power series
$\alpha(X),\beta(X),\gamma(X)\in{\mathbb Z}[[X]]$ such that $\alpha(X)(1-\alpha(X))\equiv \beta(X)\gamma(X)\, (\text{\rm mod }P)$.
Then modulo $n$
\[
a(X)\equiv \left(\begin{matrix}\overline{\alpha}(X)&\overline{\beta}(X)\\
\overline{\gamma}(X)&1-\overline{\alpha}(X)\\ \end{matrix}\right),
\]
where:

{\rm (i)} If $P,Q,R>1$, then
\[
\overline{\alpha}(X)\equiv(\alpha(X)+(1-\alpha(X))P^{\varphi(Q)})(1-(PQ)^{\varphi(R)}),
\]
\[
\overline{\beta}(X)\equiv\beta(X)(1-P^{\varphi(Q)\varphi(R)}),
\overline{\gamma}(X)\equiv\gamma(X)(1-P^{\varphi(Q)\varphi(R)});
\]

{\rm (ii)} If $P,Q>1$, $R=1$, then
\[
\overline{\alpha}(X)\equiv\alpha(X)+(1-\alpha(X))P^{\varphi(Q)},
\overline{\beta}(X)\equiv\beta(X)(1-P^{\varphi(Q)}),
\overline{\gamma}(X)\equiv\gamma(X)(1-P^{\varphi(Q)});
\]

{\rm (iii)} If $P,R>1$, $Q=1$, then
\[
\overline{\alpha}(X)\equiv\alpha(X)(1-P^{\varphi(R)}),\overline{\beta}(X)\equiv\beta(X)(1-P^{\varphi(R)}),
\overline{\gamma}(X)\equiv\gamma(X)(1-P^{\varphi(R)});
\]

{\rm (iv)} If $P=1$, $Q,R>1$, then $\overline{\alpha}(X)\equiv 1-Q^{\varphi(R)}$, $\overline{\beta}(X)\equiv\overline{\gamma}(X)\equiv 0$;

{\rm (v)} If $P>1$, $Q=R=1$, then $\overline{\alpha}(X)\equiv \alpha(X)$, $\overline{\beta}(X)\equiv \beta(X)$,
$\overline{\gamma}(X)\equiv \gamma(X)$;

{\rm (vi)} If $P=R=1$, $Q>1$, then $a(X)\equiv I_2$;

{\rm (vii)} If $P=Q=1$, $R>1$, then $a(X)\equiv 0_2$,

\noindent and $\varphi$ is the Euler totient function.
\end{theorem}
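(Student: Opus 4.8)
The plan is to reduce the statement to Corollary \ref{idempotents for direct sums} by the Chinese remainder theorem and then to realize the three component types uniformly through the standard Euler-Fermat idempotents. First I would factor $n=p_1^{d_1}\cdots p_t^{d_t}$ into prime powers and use the Chinese remainder theorem to identify ${\mathbb Z}_n$ with ${\mathbb Z}_{p_1^{d_1}}\oplus\cdots\oplus{\mathbb Z}_{p_t^{d_t}}$. By Lemma \ref{idempotents modulo power of prime} each summand ${\mathbb Z}_{p_i^{d_i}}$ has only trivial idempotents, so Corollary \ref{idempotents for direct sums} applies with $A_i={\mathbb Z}_{p_i^{d_i}}$. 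It tells us that an idempotent of $M_2({\mathbb Z}_n[[X]])$ is assembled componentwise: the index set $\{1,\dots,t\}$ splits into three blocks, on the first each component is a generic idempotent of the form (\ref{the PQR cases}), on the second each component equals $I_2$, and on the third each component equals $0_2$. Taking $P,Q,R$ to be the products of the prime powers in the three blocks yields a factorization $n=PQR$ into pairwise coprime integers, exactly as in the statement.

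Next I would lift the coefficients to the integers. For each prime power $p_i^{d_i}$ in the $P$-block the component is determined by power series $\alpha_i(X),\beta_i(X),\gamma_i(X)\in{\mathbb Z}_{p_i^{d_i}}[[X]]$ satisfying $\alpha_i(1-\alpha_i)=\beta_i\gamma_i$. Applying the Chinese remainder theorem one monomial coefficient at a time, I would produce integer power series $\alpha(X),\beta(X),\gamma(X)\in{\mathbb Z}[[X]]$ whose reductions modulo $p_i^{d_i}$ are $\alpha_i,\beta_i,\gamma_i$ for every $i$ in the $P$-block. Each relation $\alpha_i(1-\alpha_i)=\beta_i\gamma_i$ then reads $\alpha(1-\alpha)\equiv\beta\gamma\pmod{p_i^{d_i}}$, and since the $p_i^{d_i}$ over the $P$-block are coprime with product $P$, these combine into the single congruence $\alpha(X)(1-\alpha(X))\equiv\beta(X)\gamma(X)\pmod{P}$ required in the theorem.

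The main work is then to verify the closed formulas, and the one tool is the Euler-Fermat theorem in the form: if $\gcd(m,m')=1$ and $m'>1$, then $m^{\varphi(m')}\equiv 0\pmod{m}$ while $m^{\varphi(m')}\equiv 1\pmod{m'}$. Thus $P^{\varphi(Q)}$ is the Chinese-remainder idempotent that is $0$ modulo $P$ and $1$ modulo $Q$, the element $(PQ)^{\varphi(R)}$ is $0$ modulo $PQ$ and $1$ modulo $R$, and $P^{\varphi(Q)\varphi(R)}$ is $0$ modulo $P$ and $1$ modulo both $Q$ and $R$. For case (i) I would check the prescribed formulas for $\overline\alpha,\overline\beta,\overline\gamma$ by reducing each modulo $P$, modulo $Q$ and modulo $R$ separately: modulo $P$ they must return $\alpha,\beta,\gamma$, modulo $Q$ the entries $1,0,0$ of $I_2$, and modulo $R$ the entries $0,0,0$ of $0_2$. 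Each check collapses immediately after substituting the two congruences above, and by the Chinese remainder theorem agreement on all three moduli pins down the residues modulo $n$ uniquely; conversely every idempotent arises this way, since Corollary \ref{idempotents for direct sums} already supplies the componentwise decomposition that these formulas reproduce. The remaining cases (ii)--(vii) are the degenerate specializations obtained by deleting one or two blocks, that is, setting the corresponding factor equal to $1$ and dropping the vanishing exponents, so they follow from the same computation with the absent block removed.

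I expect the only genuine obstacle to be bookkeeping rather than ideas: one must make sure that the single congruence modulo $P$ faithfully encodes the independent relations over all prime powers in the $P$-block, which is handled by the coefficientwise lift above, and that the exponents in formulas such as $P^{\varphi(Q)\varphi(R)}$ are arranged so that the element is simultaneously $\equiv 1$ modulo $Q$ and modulo $R$ while remaining $\equiv 0$ modulo $P$. This is precisely why the product $\varphi(Q)\varphi(R)$, rather than a single totient, appears.
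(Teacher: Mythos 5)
Your overall route is exactly the paper's own: factor $n$ into prime powers, apply Lemma \ref{idempotents modulo power of prime} and Corollary \ref{idempotents for direct sums}, group the prime powers into three blocks with products $P$, $Q$, $R$, lift $\alpha(X),\beta(X),\gamma(X)$ to ${\mathbb Z}[[X]]$ coefficientwise by the Chinese remainder theorem, and check the closed formulas modulo $P$, $Q$ and $R$ separately via the Euler--Fermat theorem. But there is a genuine gap in the verification, and you inherit it precisely because you reproduce the paper's check (the paper verifies only $\overline{\alpha}$ in case (i); you verify only $\overline{\alpha},\overline{\beta},\overline{\gamma}$): neither you nor the paper ever look at the $(2,2)$ entry, which the statement forces to be $1-\overline{\alpha}(X)$. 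That entry must reduce to the $(2,2)$ entry of $I_2$ modulo $Q$ and of $0_2$ modulo $R$, i.e.\ to $1$ and to $0$; but since $\overline{\alpha}\equiv 1\pmod{Q}$ and $\overline{\alpha}\equiv 0\pmod{R}$, one gets $1-\overline{\alpha}\equiv 0\pmod{Q}$ and $1-\overline{\alpha}\equiv 1\pmod{R}$ --- exactly the wrong values. Put more bluntly, every matrix of the displayed shape has trace $\equiv 1\pmod{n}$, while an idempotent whose $Q$-component is $I_2$ and whose $R$-component is $0_2$ has trace $\equiv 2\pmod{Q}$ and $\equiv 0\pmod{R}$, so the asserted congruence is impossible as soon as $Q>1$ or $R>1$. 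Your sentence ``each check collapses immediately'' is true for the three entries you list and false for the fourth.

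The gap is not cosmetic: as stated, the parametrization misses idempotents, so the statement itself needs repair, which is why no blind proof of it can close this step. For $n=6$, $P=1$, $Q=2$, $R=3$, the matrix $\left(\begin{smallmatrix}3&0\\0&3\end{smallmatrix}\right)$ is an idempotent of $M_2({\mathbb Z}_6)$ whose reduction modulo $2$ is $I_2$ and modulo $3$ is $0_2$; it has trace $0$, hence is not of the displayed trace-one shape, and it is neither $I_2$ nor $0_2$, so cases (i)--(vii) never produce it (case (iv) produces the different idempotent $\left(\begin{smallmatrix}3&0\\0&4\end{smallmatrix}\right)$ instead). The repair, which your own three-moduli scheme suggests, is to replace the $(2,2)$ entry $1-\overline{\alpha}(X)$ by the Chinese-remainder lift of $1-\alpha(X)$, namely the series congruent to $1-\alpha(X)$ modulo $P$, to $1$ modulo $Q$ and to $0$ modulo $R$; in case (i) this is $\bigl((1-\alpha(X))+\alpha(X)P^{\varphi(Q)}\bigr)\bigl(1-(PQ)^{\varphi(R)}\bigr)$, which differs from $1-\overline{\alpha}(X)$. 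With that correction your argument, applied to all four entries rather than three, does go through, and both directions (every matrix of the corrected form is an idempotent, and every idempotent arises this way) follow from Corollary \ref{idempotents for direct sums} exactly as you outline.
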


\begin{proof}
As in \cite[page 151]{KKS}, if $n=\prod  p^d$, where $p$ are the prime divisors of $n$,
we present the ring ${\mathbb Z}_n$ as the direct sum of the rings ${\mathbb Z}_{p^d}$.
If $a(X)\in M_2({\mathbb Z}_n[[X]])$ is an idempotent, by Lemma \ref{idempotents modulo power of prime} we can apply Corollary \ref{idempotents for direct sums}.
We divide the prime divisors of $n$ in three groups
$\{p_1,\ldots,p_k\}$, $\{q_1,\ldots,q_l\}$ and $\{r_1,\ldots,r_m\}$ depending on the form of the projection of $a(X)$ in $M_2({\mathbb Z}_{p^d}[[X]])$:
$p\in\{p_1,\ldots,p_k\}$ if $a(X)$ is of the form (\ref{the PQR cases}) and $p\in \{q_1,\ldots,q_l\}$ or $p\in\{r_1,\ldots,r_m\}$ if the projection
is, respectively, the identity matrix and the zero matrix. Let $P=p_1^{d_1}\cdots p_k^{d_k}$, $Q=q_1^{e_1}\cdots q_l^{e_l}$, $R=r_1^{f_1}\cdots r_m^{f_m}$.
The image of $a(X)$ in $M_2({\mathbb Z}_P[[X]])\cong M_2({\mathbb Z}_{p_1^{d_1}}[[X]])\oplus\cdots\oplus M_2({\mathbb Z}_{p_k^{d_k}}[[X]])$ is of the form (\ref{the PQR cases}).
If we choose the images of $\alpha(X),\beta(X),\gamma(X)$ modulo $p_i^{d_i}$, we can find their images modulo $P$ using the Chinese reminder theorem.
The images of $a(X)$ in $M_2({\mathbb Z}_Q[[X]])\cong M_2({\mathbb Z}_{q_1}[[X]])\oplus\cdots\oplus M_2({\mathbb Z}_{q_l}[[X]])$ and
$M_2({\mathbb Z}_R[[X]])\cong M_2({\mathbb Z}_{r_1}[[X]])\oplus\cdots\oplus M_2({\mathbb Z}_{r_m}[[X]])$ are, respectively, equal to the identity matrix and the zero matrix.
Since $P,Q$ and $R$ are pairwise coprime, it is sufficient to check whether the form of $a(X)$ given in the cases (i) -- (vii) in the theorem
satisfy the required conditions modulo $P, Q$ and $R$. We shall check this for $\overline{\alpha}(X)$ in the case (i) only.
The other cases are handled similarly. Since $\varphi(Q),\varphi(R)\geq 1$, obviously
\[
\overline{\alpha}(X)\equiv(\alpha(X)+(1-\alpha(X))P^{\varphi(Q)})(1-(PQ)^{\varphi(R)}\equiv \alpha(X)\,(\text{mod }P).
\]
By the Euler-Fermat theorem $P^{\varphi(Q)}\equiv 1\,(\text{mod }Q)$. Hence
\[
\overline{\alpha}(X)\equiv(\alpha(X)+(1-\alpha(X))\equiv 1\,(\text{mod }Q),
\]
and in a similar way we establish that $\overline{\alpha}(X)\equiv0\,(\text{mod }R)$.
\end{proof}

\end{document}